\newtheorem{theorem}{Theorem}
\newtheorem{lemma}[theorem]{Lemma}
\theoremstyle{definition}
\newcommand\eqdef{\stackrel{\mathrm{def}}{=}}
\title{Generating functions of lattice paths}
\author[R.~Duarte]{Rui Duarte}
\address{CIDMA and Department of Mathematics, University of Aveiro, 3810-193 Aveiro, Portugal}
\email{rduarte@ua.pt}
\author[A.~Guedes~de~Oliveira]{Ant\'onio Guedes de Oliveira}
\address{CMUP and Department of Mathematics, Faculty of Sciences, University of Porto, 4169-007 Porto, Portugal}
\email{agoliv@fc.up.pt}
\thanks{The authors were partially supported by CIDMA and CMUP, respectively, which are financed by national funds through Funda\c{c}\~ao para a Ci\^encia e a Tecnologia (FCT) within projects UIDB/04106/2020 (CIDMA, \url{https://doi.org/10.54499/UIDB/04106/2020}) and UIDB/00144/2020 (CMUP, \url{https://doi.org/10.54499/UIDB/00144/2020})}
\newcommand\N{\mathbb{N}}
\newcommand\Dotfill {\leavevmode \xleaders \hb@xt@ .55em{\hss .\hss }\hfill \kern \z@}
\newcommand\tri[2]{\bigg(\binom{#1}{#2}\bigg)_{0\leq d\leq n}}
\begin{document}
\begin{abstract}
We recall the main types of \emph{lattice paths}, which are sequences in the lattice of integer coordinates points in the plane. We start with the fundamental \emph{central lattice paths} and \emph{Dyck paths} and proceed in elementary terms through recently introduced lattice paths.

For every type, we consider the respective \emph{generating function}. In fact, through our approach (via \emph{Riordan arrays}), various entries of the On-Line Encyclopedia of Integer Sequences are unified, clarified, and simplified.
\end{abstract}

\maketitle

\section{Introduction}
A \emph{lattice path} is a sequence of points of integer coordinates in the plane such that the difference in coordinates of two consecutive points belongs to a given (small) set of vectors.  In our case, all paths start at $(0,0)$ and end at a point of the $x$-axis. A \emph{central lattice path} of length $2n$ for some $n\in\N$ is a path that starts at $(0,0)$ and ends at $(2n,0)$, such that two subsequent points in the sequence either differ by $D=(1,-1)$ (a down step) or by $U=(1,1)$ (an up step). The path may be seen as a sequence of letters $D$ and $U$ in {equal} number, $n$, and the position of the $n$ letters $D$ (or of the $n$ letters $U$) within the $2n$ letters determines bijectively the path. Hence, the number of central lattice paths of length $2n$ is $\binom{2n}{n}$. This forms a sequence that we can find in the On-Line Encyclopedia of Integer Sequences (OEIS), with reference 
\begin{gather*}\text{\href{https://oeis.org/A000984}{A000984}:}
\bigg(\,\binom{2n}{n}\,\bigg)_{n\geq0}=(1, 2, \textbf{6} \footnotemark, 20, 70, 252, 924, 3\,432, 12\,870,\dotsc)\,.
\shortintertext{Let us now consider the correspondent \emph{generating function}, by definition}
f(x)=\sum_{n\geq0} \binom{2n}{n} x^n\,.
\end{gather*}
\footnotetext{See Figure~2.}

Generating functions were {very successively used} by Euler~\footnote{To count \emph{partitions}, that is, to count the number of ways of writing a positive number as the sum of smaller positive numbers.} with few explanations, in a way that we might think of as ``infinite polynomials'', with operations based on those of polynomials 
---the generating functions associated with quasi-zero sequences---
with similar operational algebraic properties. For example, we define $\Big(\sum_{n\geq0}a_n\,x^n\Big)(0):=a_0$,
\begin{align*}
&\Big(\sum_{n\geq0}a_n\,x^n\Big)+\Big(\sum_{n\geq0}b_n\,x^n\Big):=\sum_{n\geq0}(a_n+b_n)x^n
\shortintertext{and}
&\Big(\sum_{n\geq0}a_n\,x^n\Big)\,\Big(\sum_{n\geq0}b_n\,x^n\Big):=\sum_{n\geq0}\big({\textstyle\sum_{i=0}^n} a_i\,b_{n-i}\big)x^n\,.
\end{align*}
Thus, e.g., 
${\displaystyle\frac{1}{1-x}}=\sum_{n\geq0} x^n$
since $1=\big(\sum_{n\geq0} x^n\big)(1-x)$  in terms of generating functions, or
\[1+ \sum_{n\geq1} 0\cdot x^n=\Big(\sum_{n\geq0} 1\cdot x^n\Big)\Big(1-x +\sum_{n\geq2} 0\cdot x^n\Big)\,.\]
We also write $g(x)=\sqrt{f(x)}$ for generating functions $f(x)$ and $g(x)$ such that $f(0),g(0)\geq0$ with the obvious meaning that $g(x)\,g(x)=f(x)$.
Note that although some generating functions correspond to convergent series, at least in some neighborhoods of 0, others do not, like for example, 
$f(x)=\sum_{n\geq0}n!x^n$.  Yet, in some cases, we may use this correspondence. For example, note that
\begin{align}
&\binom{-1/2}{n}=\frac{(-1/2)(-3/2)\dotsb(-1/2-n+1)}{n!}\nonumber\\
&\hphantom{\binom{-1/2}{n}}=\Big(\!\!-\frac{1}{2}\Big)^n\!\frac{(2n)!}{2^n n!\,n!}\,,\nonumber\\
&\hphantom{\binom{-1/2}{n}}=\Big(\!\!-\frac{1}{4}\Big)^n\binom{2n}{n}\nonumber
\shortintertext{and hence, by the generalized Newton binomial theorem, for $|x|<\frac{1}{4}$, }
&(1-4x)^{-1/2}=\lim_{p\to+\infty}\sum_{n=0}^p\binom{2n}{n}x^n\nonumber
\shortintertext{and so, also \emph{as generating functions}, $\big(\sum_{n\geq0}\binom{2n}{n}\,x^n\big)^2=\frac{1}{1-4x}$ and}
&f(x)=\frac{1}{\sqrt{1-4x}}\,.~\footnotemark\label{eq.fgCLP}
\end{align}
\footnotetext{In particular, since
$\displaystyle\Big(\sum_{n\geq0}\binom{2n}{n}\,x^n\Big)^2=\sum_{n\geq0}4^nx^n$, 
$\displaystyle\sum_{\text{\tiny$\substack{0\leq i,j\leq n\\i+j=n}$}}\binom{2i}{i}\binom{2j}{j}=4^n$.}

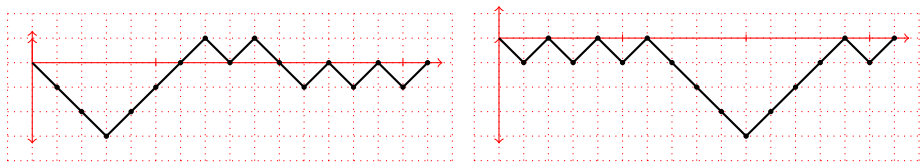
\begin{figure}[ht]
\label{fig1}
\begin{tikzpicture}[scale=.325]
\draw[red,thin,<->] (16.6,0) -- (0,0) -- (0,1.3) ;
\draw[red,thin,<->](0,1) -- (0,-3.3) ;
\draw[step=1,dotted,red] (-1,-4) grid (17,2);
\draw[thin,red] (5,-.15) -- (5,.15);
\draw[thin,red] (10,-.15) -- (10,.15);
\draw[thin,red] (15,-.15) -- (15,.15);
\draw[thick] (0,0)  -- (1,-1) -- (2,-2) -- (3,-3) -- (4,-2) -- (5,-1) -- (6,0) -- (7,1) -- (8,0) -- (9,1) -- (10,0) -- (11,-1) -- (12,0) -- (13,-1) -- (14,0) -- (15,-1) -- (16,0) ;
\draw[fill](1,-1) circle(.085);
\draw[fill](2,-2) circle(.085);
\draw[fill](3,-3) circle(.085);
\draw[fill](4,-2) circle(.085);
\draw[fill](5,-1) circle(.085);
\draw[fill](6,0) circle(.085);
\draw[fill](7,1) circle(.085);
\draw[fill](8,0) circle(.085);
\draw[fill](9,1) circle(.085);
\draw[fill](10,0) circle(.085);
\draw[fill](11,-1) circle(.085);
\draw[fill](12,0) circle(.085);
\draw[fill](13,-1) circle(.085);
\draw[fill](14,0) circle(.085);
\draw[fill](15,-1) circle(.085);
\draw[fill](16,0) circle(.085);
\end{tikzpicture}
\ 
\begin{tikzpicture}[scale=.325]
\draw[red,thin,<->] (16.6,0) -- (0,0) -- (0,1.3) ;
\draw[red,thin,<->](0,0) -- (0,-4.3) ;
\draw[step=1,dotted,red] (-1,-5) grid (17,1);
\draw[thin,red] (5,-.15) -- (5,.15);
\draw[thin,red] (10,-.15) -- (10,.15);
\draw[thin,red] (15,-.15) -- (15,.15);
\draw[thick] (0,0)  -- (1,-1) -- (2,0) -- (3,-1) -- (4,0) -- (5,-1) -- (6,0) -- (7,-1) -- (8,-2) -- (9,-3) -- (10,-4) -- (11,-3) -- (12,-2) -- (13,-1) -- (14,0) -- (15,-1) -- (16,0) ;
\draw[fill](1,-1) circle(.085);
\draw[fill](2,0) circle(.085);
\draw[fill](3,-1) circle(.085);
\draw[fill](4,0) circle(.085);
\draw[fill](5,-1) circle(.085);
\draw[fill](6,0) circle(.085);
\draw[fill](7,-1) circle(.085);
\draw[fill](8,-2) circle(.085);
\draw[fill](9,-3) circle(.085);
\draw[fill](10,-4) circle(.085);
\draw[fill](11,-3) circle(.085);
\draw[fill](12,-2) circle(.085);
\draw[fill](13,-1) circle(.085);
\draw[fill](14,0) circle(.085);
\draw[fill](15,-1) circle(.085);
\draw[fill](16,0) circle(.085);
\end{tikzpicture}
\caption{A \emph{central lattice path} and a \emph{Dyck path}}
\end{figure}

A central lattice path without any point above the $x$-axis is called a \emph{Dyck path}. We may count Dyck paths by counting \emph{non-Dick} central lattice paths $\mathcal{D}$ that \emph{do} have a first point $P$ above the $x$-axis: build a path $\mathcal{D}'$ with the same steps as $\mathcal{D}$ up to $P$ and with the opposite steps afterward. Then $\mathcal{D}'$ is a lattice path from $(0,0)$ to $(2n,2)$, and every lattice path from $(0,0)$ to $(2n,2)$ can be thus obtained. Since there are $\binom{2n}{n-1}$ lattice paths (with $n-1$ down steps and $n+1$ up steps) from $(0,0)$ to $(2n,2)$, the number of Dyck paths is the \emph{Catalan number of order $n$},
\[C_n=\binom{2n}{n}-\binom{2n}{n-1}=\binom{2n}{n}-\frac{n}{n+1}\binom{2n}{n}=\frac{1}{n+1}\binom{2n}{n}\,.\]
This forms the sequence in OEIS
\[\text{\href{https://oeis.org/A000108}{A000108}:}
\big(\,C_n\,\big)_{n\geq0}=(1, 1, \textbf{2}, 5, 14, 42, 132, 429, 1\,430, \dotsc)\,.\]
Since
\[C_n = 2 \binom{2n}{n} - \frac{1}{2} \binom{2(n+1)}{n+1}\,,\]
we have that
\begin{align*}
2x \sum_{n \geq 0} \frac{1}{n+1} \binom{2n}{n} x^n 
& = 4x \sum_{n \geq 0} \binom{2n}{n} x^n - \sum_{n \geq 1} \binom{2n}{n} x^n \\
& = 1 + (4x-1) \sum_{n \geq 0} \binom{2n}{n} x^n \\
& = 1-\sqrt{1-4x} \,.
\end{align*}
Hence,
\begin{equation}
\sum_{n\geq0}C_{n}x^{n}=\frac{2}{1+\sqrt{1-4x}}\,.\label{eq.fgCatalan}
\end{equation}

\section{Other lattice paths}
\subsection{Central Delannoy paths and Schr\"oder paths}
In a \emph{central Delannoy paths} from $(0,0)$ to $(2n,0)$, for $n\in\N$, two subsequent points in the sequence either differ by $D=(1,-1)$ or by $U=(1,1)$, as before, or by a \emph{horizontal step} $H=(2,0)$. The central Delannoy paths that remain below the diagonal are called \emph{Schr\"oder paths}.

Any such path $\mathfrak{d}$ with $d$ down steps must present also $d$ up steps and $n-d$ forward steps. Of course, $\mathfrak{d}$ is determined by the length $2d$ central lattice path $\mathfrak{c}$ formed by the $D$s and the $U$s, and by the positions on $\mathfrak{c}$ where the $n-d$ letters $H$ are placed. Hence, with the same central lattice path $\mathfrak{c}$, there are 
\[\binom{2d+(n-d)}{n-d}=\binom{n+d}{n-d}\]
central Delannoy paths. Note that $\mathfrak{d}$ is a Schr\"oder path if and only if $\mathfrak{c}$ is a Dyck path. The numbers of central Delannoy paths form the OEIS sequence 
\begin{align*}
\text{\href{https://oeis.org/A001850}{A001850}:}&\ \Big(\textstyle\sum\limits_{d=0}^n \binom{n+d}{n-d}\binom{2d}{d}\Big)_{n\geq0}\!\!=
(\text{\small$1, 3, \textbf{13}, 63, 321, 1\,683, 8\,989, 48\,639, \dotsc$})
\shortintertext{whereas the numbers of Schr\"oder paths form the OEIS sequence}
\text{\href{https://oeis.org/A006318}{A006318}:}&\ \Big(\textstyle\sum\limits_{d=0}^n \frac{1}{d+1}\binom{n+d}{n-d}\binom{2d}{d}\Big)_{n\geq0}\!\!=
(\text{\small$1, 2, \textbf{6}, 22, 90, 394, 1\,806, 8\,558, \dotsc$})
\end{align*}

Below, we represent the central Delannoy paths for $n=2$, where the paths drawn in red are Schr\"oder paths. The first six paths are the central lattice paths. 
\begin{figure}[ht]
\label{fig2}
\begin{tikzpicture}[scale=.4]
\draw[thin] (1,-.15) -- (1,.15);
\draw[thin] (2,-.15) -- (2,.15);
\draw[thin] (3,-.15) -- (3,.15);
\draw[thin] (4,-.15) -- (4,.15);
\draw[thin] (-.15,2) -- (.15,2);
\draw[thin] (-.15,1) -- (.15,1);
\draw[thin] (-.15,-1) -- (.15,-1);
\draw[thin] (-.15,-2) -- (.15,-2);
\draw[thick,red] (0,0)  -- (1,-1) -- (2,-2) -- (3,-1) -- (4,0) ;
\draw[fill](1,-1) circle(.085);
\draw[fill](2,-2) circle(.085);
\draw[fill](3,-1) circle(.085);
\draw[fill](4,0) circle(.085);
\draw[thin,<->] (4.3,0) -- (0,0) -- (0,2.3) ;
\draw[thin,<->](0,0) -- (0,-2.3);
\end{tikzpicture}
\
\begin{tikzpicture}[scale=.4]
\draw[thin] (1,-.15) -- (1,.15);
\draw[thin] (2,-.15) -- (2,.15);
\draw[thin] (3,-.15) -- (3,.15);
\draw[thin] (4,-.15) -- (4,.15);
\draw[thin] (-.15,2) -- (.15,2);
\draw[thin] (-.15,1) -- (.15,1);
\draw[thin] (-.15,-1) -- (.15,-1);
\draw[thin] (-.15,-2) -- (.15,-2);
\draw[thick,red] (0,0)  -- (1,-1) -- (2,0) -- (3,-1) -- (4,0) ;
\draw[fill](1,-1) circle(.085);
\draw[fill](2,0) circle(.085);
\draw[fill](3,-1) circle(.085);
\draw[fill](4,0) circle(.085);
\draw[thin,<->] (4.3,0) -- (0,0) -- (0,2.3) ;
\draw[thin,<->](0,0) -- (0,-2.3);
\end{tikzpicture}
\
\begin{tikzpicture}[scale=.4]
\draw[thin] (1,-.15) -- (1,.15);
\draw[thin] (2,-.15) -- (2,.15);
\draw[thin] (3,-.15) -- (3,.15);
\draw[thin] (4,-.15) -- (4,.15);
\draw[thin] (-.15,2) -- (.15,2);
\draw[thin] (-.15,1) -- (.15,1);
\draw[thin] (-.15,-1) -- (.15,-1);
\draw[thin] (-.15,-2) -- (.15,-2);
\draw[thick] (0,0)  -- (1,-1) -- (2,0) -- (3,1) -- (4,0) ;
\draw[fill](1,-1) circle(.085);
\draw[fill](2,0) circle(.085);
\draw[fill](3,1) circle(.085);
\draw[fill](4,0) circle(.085);
\draw[thin,<->] (4.3,0) -- (0,0) -- (0,2.3) ;
\draw[thin,<->](0,0) -- (0,-2.3);
\end{tikzpicture}
\
\begin{tikzpicture}[scale=.4]
\draw[thin] (1,-.15) -- (1,.15);
\draw[thin] (2,-.15) -- (2,.15);
\draw[thin] (3,-.15) -- (3,.15);
\draw[thin] (4,-.15) -- (4,.15);
\draw[thin] (-.15,2) -- (.15,2);
\draw[thin] (-.15,1) -- (.15,1);
\draw[thin] (-.15,-1) -- (.15,-1);
\draw[thin] (-.15,-2) -- (.15,-2);
\draw[thick] (0,0)  -- (1,1) -- (2,0) -- (3,-1) -- (4,0) ;
\draw[fill](1,1) circle(.085);
\draw[fill](2,0) circle(.085);
\draw[fill](3,-1) circle(.085);
\draw[fill](4,0) circle(.085);
\draw[thin,<->] (4.3,0) -- (0,0) -- (0,2.3) ;
\draw[thin,<->](0,0) -- (0,-2.3);
\end{tikzpicture}
\
\begin{tikzpicture}[scale=.4]
\draw[thin] (1,-.15) -- (1,.15);
\draw[thin] (2,-.15) -- (2,.15);
\draw[thin] (3,-.15) -- (3,.15);
\draw[thin] (4,-.15) -- (4,.15);
\draw[thin] (-.15,2) -- (.15,2);
\draw[thin] (-.15,1) -- (.15,1);
\draw[thin] (-.15,-1) -- (.15,-1);
\draw[thin] (-.15,-2) -- (.15,-2);
\draw[thick] (0,0)  -- (1,1) -- (2,0) -- (3,1) -- (4,0) ;
\draw[fill](1,1) circle(.085);
\draw[fill](2,0) circle(.085);
\draw[fill](3,1) circle(.085);
\draw[fill](4,0) circle(.085);
\draw[thin,<->] (4.3,0) -- (0,0) -- (0,2.3) ;
\draw[thin,<->](0,0) -- (0,-2.3);
\end{tikzpicture}
\
\begin{tikzpicture}[scale=.4]
\draw[thin] (1,-.15) -- (1,.15);
\draw[thin] (2,-.15) -- (2,.15);
\draw[thin] (3,-.15) -- (3,.15);
\draw[thin] (4,-.15) -- (4,.15);
\draw[thin] (-.15,2) -- (.15,2);
\draw[thin] (-.15,1) -- (.15,1);
\draw[thin] (-.15,-1) -- (.15,-1);
\draw[thin] (-.15,-2) -- (.15,-2);
\draw[thick] (0,0)  -- (1,1) -- (2,2) -- (3,1) -- (4,0) ;
\draw[fill](1,1) circle(.085);
\draw[fill](2,2) circle(.085);
\draw[fill](3,1) circle(.085);
\draw[fill](4,0) circle(.085);
\draw[thin,<->] (4.3,0) -- (0,0) -- (0,2.3) ;
\draw[thin,<->](0,0) -- (0,-2.3);
\end{tikzpicture}

\begin{tikzpicture}[scale=.4]
\draw[thin] (1,-.15) -- (1,.15);
\draw[thin] (2,-.15) -- (2,.15);
\draw[thin] (3,-.15) -- (3,.15);
\draw[thin] (4,-.15) -- (4,.15);
\draw[thin] (-.15,2) -- (.15,2);
\draw[thin] (-.15,1) -- (.15,1);
\draw[thin] (-.15,-1) -- (.15,-1);
\draw[thin] (-.15,-2) -- (.15,-2);
\draw[thick,red] (0,0)  -- (2,0) -- (2,0) -- (3,-1) -- (4,0) ;
\draw[fill](2,0) circle(.085);
\draw[fill](2,0) circle(.085);
\draw[fill](3,-1) circle(.085);
\draw[fill](4,0) circle(.085);
\draw[thin,<-] (4.3,0) -- (2,0) ;
\draw[thin,->] (0,0) -- (0,2.3) ;
\draw[thin,<->](0,0) -- (0,-2.3);
\end{tikzpicture}
\
\begin{tikzpicture}[scale=.4]
\draw[thin] (1,-.15) -- (1,.15);
\draw[thin] (2,-.15) -- (2,.15);
\draw[thin] (3,-.15) -- (3,.15);
\draw[thin] (4,-.15) -- (4,.15);
\draw[thin] (-.15,2) -- (.15,2);
\draw[thin] (-.15,1) -- (.15,1);
\draw[thin] (-.15,-1) -- (.15,-1);
\draw[thin] (-.15,-2) -- (.15,-2);
\draw[thick] (0,0)  -- (2,0) -- (2,0) -- (3,1) -- (4,0) ;
\draw[fill](2,0) circle(.085);
\draw[fill](2,0) circle(.085);
\draw[fill](3,1) circle(.085);
\draw[fill](4,0) circle(.085);
\draw[thin,<-] (4.3,0) -- (2,0) ;
\draw[thin,->] (0,0) -- (0,2.3) ;
\draw[thin,<->](0,0) -- (0,-2.3);
\end{tikzpicture}
\
\begin{tikzpicture}[scale=.4]
\draw[thin] (1,-.15) -- (1,.15);
\draw[thin] (2,-.15) -- (2,.15);
\draw[thin] (3,-.15) -- (3,.15);
\draw[thin] (4,-.15) -- (4,.15);
\draw[thin] (-.15,2) -- (.15,2);
\draw[thin] (-.15,1) -- (.15,1);
\draw[thin] (-.15,-1) -- (.15,-1);
\draw[thin] (-.15,-2) -- (.15,-2);
\draw[thick,red] (0,0)  -- (1,-1) -- (3,-1) -- (3,-1) -- (4,0) ;
\draw[fill](1,-1) circle(.085);
\draw[fill](3,-1) circle(.085);
\draw[fill](3,-1) circle(.085);
\draw[fill](4,0) circle(.085);
\draw[thin,<->] (4.3,0) -- (0,0) -- (0,2.3) ;
\draw[thin,<->](0,0) -- (0,-2.3);
\end{tikzpicture}
\
\begin{tikzpicture}[scale=.4]
\draw[thin] (1,-.15) -- (1,.15);
\draw[thin] (2,-.15) -- (2,.15);
\draw[thin] (3,-.15) -- (3,.15);
\draw[thin] (4,-.15) -- (4,.15);
\draw[thin] (-.15,2) -- (.15,2);
\draw[thin] (-.15,1) -- (.15,1);
\draw[thin] (-.15,-1) -- (.15,-1);
\draw[thin] (-.15,-2) -- (.15,-2);
\draw[very thick, densely dashed] (0,0)  -- (1,1) -- (3,1) -- (3,1) ;
\draw[thick]  (3,1) -- (4,0) ;
\draw[fill](1,1) circle(.085);
\draw[fill](3,1) circle(.085);
\draw[fill](3,1) circle(.085);
\draw[fill](4,0) circle(.085);
\draw[thin,<->] (4.3,0) -- (0,0) -- (0,2.3) ;
\draw[thin,<->](0,0) -- (0,-2.3);
\end{tikzpicture}
\
\begin{tikzpicture}[scale=.4]
\draw[thin,<-] (4.3,0) -- (4,0)  ;
\draw[thin,->] (2,0) -- (0,0) -- (0,2.3) ;
\draw[thin,<->](0,0) -- (0,-2.3);
\draw[thin] (1,-.15) -- (1,.15);
\draw[thin] (2,-.15) -- (2,.15);
\draw[thin] (3,-.15) -- (3,.15);
\draw[thin] (4,-.15) -- (4,.15);
\draw[thin] (-.15,2) -- (.15,2);
\draw[thin] (-.15,1) -- (.15,1);
\draw[thin] (-.15,-1) -- (.15,-1);
\draw[thin] (-.15,-2) -- (.15,-2);
\draw[thick,red] (0,0)  -- (1,-1) ;
\draw[very thick,red, densely dashed] (1,-1) -- (2,0) -- (4,0) -- (4,0) ;
\draw[fill](1,-1) circle(.085);
\draw[fill](2,0) circle(.085);
\draw[fill](4,0) circle(.085);
\draw[fill](4,0) circle(.085);
\end{tikzpicture}
\
\begin{tikzpicture}[scale=.4]
\draw[thin] (1,-.15) -- (1,.15);
\draw[thin] (2,-.15) -- (2,.15);
\draw[thin] (3,-.15) -- (3,.15);
\draw[thin] (4,-.15) -- (4,.15);
\draw[thin] (-.15,2) -- (.15,2);
\draw[thin] (-.15,1) -- (.15,1);
\draw[thin] (-.15,-1) -- (.15,-1);
\draw[thin] (-.15,-2) -- (.15,-2);
\draw[thick] (0,0)  -- (1,1) -- (2,0) -- (4,0) -- (4,0) ;
\draw[fill](1,1) circle(.085);
\draw[fill](2,0) circle(.085);
\draw[fill](4,0) circle(.085);
\draw[fill](4,0) circle(.085);
\draw[thin,<-] (4.3,0) -- (4,0)  ;
\draw[thin,->] (2,0) -- (0,0) -- (0,2.3) ;
\draw[thin,<->](0,0) -- (0,-2.3);
\end{tikzpicture}

\begin{tikzpicture}[scale=.4]
\draw[thin,<->] (4.3,0) -- (4,0) ;
\draw[thin,<->] (0,0) -- (0,2.3) ;
\draw[thin,<->](0,0) -- (0,-2.3);
\draw[thin] (1,-.15) -- (1,.15);
\draw[thin] (2,-.15) -- (2,.15);
\draw[thin] (3,-.15) -- (3,.15);
\draw[thin] (4,-.15) -- (4,.15);
\draw[thin] (-.15,2) -- (.15,2);
\draw[thin] (-.15,1) -- (.15,1);
\draw[thin] (-.15,-1) -- (.15,-1);
\draw[thin] (-.15,-2) -- (.15,-2);
\draw[thick,red] (0,0)  -- (2,0) -- (2,0) -- (4,0) -- (4,0) ;
\draw[fill](2,0) circle(.085);
\draw[fill](2,0) circle(.085);
\draw[fill](4,0) circle(.085);
\draw[fill](4,0) circle(.085);
\end{tikzpicture}
\caption{Delannoy and Schr\"oder paths with $n=2$}
\end{figure}
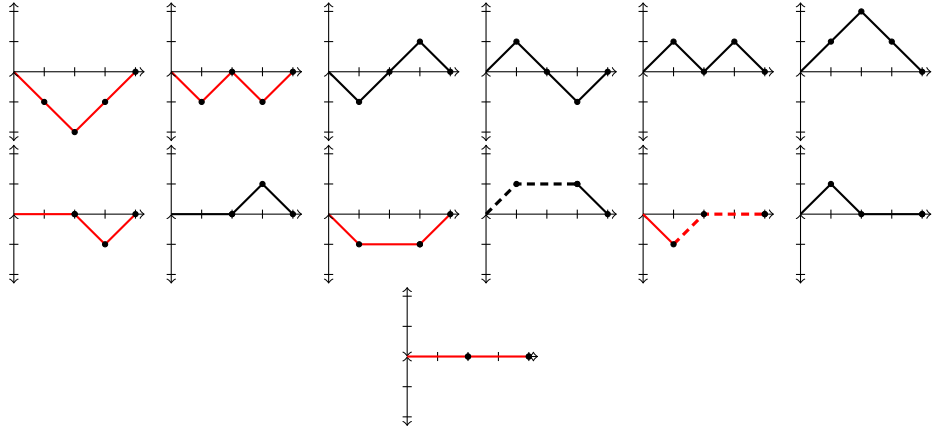

\subsection{[Big] Motzkin paths}
If, instead of allowing steps with $H=(2,0)$, we allow steps with $F=(1,0)$, the path $\mathfrak{m}$ is called a \emph{big Motzkin path}. A big Motzkin path is simply a \emph{Motzkin path} if the path remains below the $x$-axis. Note that, if the path contains $d$ down steps, then it contains also $d$ up steps and $n-2d$ forward steps. Hence, for a given central lattice path $\mathfrak{c}$, there are 
\[\binom{2d+(n-2d)}{n-2d}=\binom{n}{n-2d}=\binom{n}{2d}\]
Motzkin paths. Again, Motzkin paths occur when $\mathfrak{c}$ is a Dyck path. The numbers of big Motzkin paths form the \emph{sequence of central trinomial coefficients}, the OEIS sequence
\begin{align*}
\text{\href{https://oeis.org/A002426}{A002426}:}&\ \Big(\textstyle\sum\limits_{d=0}^n \binom{n}{2d}\binom{2d}{d}\Big)_{n\geq0}\!\!=
(\text{\small$1, 1, 3, 7, 19, \textbf{51}, 141, 393, 1\,107, 3\,139,\dotsc$})
\shortintertext{whereas the numbers of Motzkin paths form the OEIS sequence} 
\text{\href{https://oeis.org/A001006}{A001006}:}&\ \Big(\textstyle\sum\limits_{d=0}^n \frac{1}{d+1}\binom{n}{2d}\binom{2d}{d}\Big)_{n\geq0}\!\!=
(\text{\small$1, 1, 2, 4, 9, \textbf{21}, 51, 127, 323, 835,\dotsc$})
\end{align*}

We first represent the $21$ Motzkin paths and then the remaining $30$ big Motzkin paths of length $5$.
\begin{figure}[ht]
\label{fig3}
\begin{align*}
&\begin{tikzpicture}[scale=.3]
\draw[thin,<->] (5.3,0) -- (0,0) -- (0,2.5) ;
\draw[thin,->] (0,0) -- (0,-2.5) ;
\draw (0,0)  -- (1,0) -- (2,0) -- (3,0) -- (4,0) -- (5,0) ;
\draw[fill](1,0) circle(.1);
\draw[fill](2,0) circle(.1);
\draw[fill](3,0) circle(.1);
\draw[fill](4,0) circle(.1);
\draw[fill](5,0) circle(.1);
\end{tikzpicture}
\,\begin{tikzpicture}[scale=.3]
\draw[thin,<-] (5.3,0) -- (3,0)  ;
\draw[thin,->] (2,0) -- (0,0) -- (0,2.5) ;
\draw[thin,->] (0,0) -- (0,-2.5) ;
\draw (0,0)  -- (1,-1) ;
\draw[red, very thick, densely dashed] (1,-1) -- (2,0) -- (3,0) ;
\draw (3,0) -- (4,0) -- (5,0) ;
\draw[fill](1,-1) circle(.1);
\draw[fill](2,0) circle(.1);
\draw[fill](3,0) circle(.1);
\draw[fill](4,0) circle(.1);
\draw[fill](5,0) circle(.1);
\end{tikzpicture}
\,\begin{tikzpicture}[scale=.3]
\draw[thin,<-] (5.3,0) -- (4,0) ;
\draw[thin,->] (3,0) -- (0,0) -- (0,2.5) ;
\draw[thin,->] (0,0) -- (0,-2.5) ;
\draw (0,0)  -- (1,-1) -- (2,-1)  ;
\draw[red, very thick, densely dashed] (2,-1) -- (3,0) -- (4,0) ;
\draw (4,0) -- (5,0) ;
\draw[fill](1,-1) circle(.1);
\draw[fill](2,-1) circle(.1);
\draw[fill](3,0) circle(.1);
\draw[fill](4,0) circle(.1);
\draw[fill](5,0) circle(.1);
\end{tikzpicture}
\,\begin{tikzpicture}[scale=.3]
\draw[thin,<-] (5.3,0) -- (5,0) ;
\draw[thin,->] (4,0) -- (0,0) -- (0,2.5) ;
\draw[thin,->] (0,0) -- (0,-2.5) ;
\draw (0,0)  -- (1,-1) -- (2,-1) -- (3,-1) ;
\draw[red, very thick, densely dashed] (3,-1) -- (4,0) -- (5,0) ;
\draw[fill](1,-1) circle(.1);
\draw[fill](2,-1) circle(.1);
\draw[fill](3,-1) circle(.1);
\draw[fill](4,0) circle(.1);
\draw[fill](5,0) circle(.1);
\end{tikzpicture}
\,\begin{tikzpicture}[scale=.3]
\draw[thin,<->] (5.3,0) -- (0,0) -- (0,2.5) ;
\draw[thin,->] (0,0) -- (0,-2.5) ;
\draw (0,0)  -- (1,-1) -- (2,-1) -- (3,-1) -- (4,-1) -- (5,0) ;
\draw[fill](1,-1) circle(.1);
\draw[fill](2,-1) circle(.1);
\draw[fill](3,-1) circle(.1);
\draw[fill](4,-1) circle(.1);
\draw[fill](5,0) circle(.1);
\end{tikzpicture}
\,\begin{tikzpicture}[scale=.3]
\draw[thin,<-] (5.3,0) -- (4,0) ;
\draw[thin,->] (3,0) -- (0,0) -- (0,2.5) ;
\draw[thin,->] (0,0) -- (0,-2.5) ;
\draw (0,0)  -- (1,0) -- (2,-1) ;
\draw[red, very thick, densely dashed] (2,-1) -- (3,0) -- (4,0)  ;
\draw (4,0) -- (5,0) ;
\draw[fill](1,0) circle(.1);
\draw[fill](2,-1) circle(.1);
\draw[fill](3,0) circle(.1);
\draw[fill](4,0) circle(.1);
\draw[fill](5,0) circle(.1);
\end{tikzpicture}
\,\begin{tikzpicture}[scale=.3]
\draw[thin,<-] (5.3,0) -- (5,0) ;
\draw[thin,->] (4,0) -- (0,0) -- (0,2.5) ;
\draw[thin,->] (0,0) -- (0,-2.5) ;
\draw (0,0)  -- (1,0) -- (2,-1) -- (3,-1) ;
\draw[red, very thick, densely dashed] (3,-1) -- (4,0) -- (5,0) ;
\draw[fill](1,0) circle(.1);
\draw[fill](2,-1) circle(.1);
\draw[fill](3,-1) circle(.1);
\draw[fill](4,0) circle(.1);
\draw[fill](5,0) circle(.1);
\end{tikzpicture}\\[-2.5pt]
&\begin{tikzpicture}[scale=.3]
\draw[thin,<->] (5.3,0) -- (0,0) -- (0,2.5) ;
\draw[thin,->] (0,0) -- (0,-2.5) ;
\draw (0,0)  -- (1,0) -- (2,-1) -- (3,-1) -- (4,-1) -- (5,0) ;
\draw[fill](1,0) circle(.1);
\draw[fill](2,-1) circle(.1);
\draw[fill](3,-1) circle(.1);
\draw[fill](4,-1) circle(.1);
\draw[fill](5,0) circle(.1);
\end{tikzpicture}
\,\begin{tikzpicture}[scale=.3]
\draw[thin,<-] (5.3,0) -- (5,0) ;
\draw[thin,->] (4,0) -- (0,0) -- (0,2.5) ;
\draw[thin,->] (0,0) -- (0,-2.5) ;
\draw (0,0)  -- (1,0) -- (2,0) -- (3,-1) ;
\draw[red, very thick, densely dashed]  (3,-1) -- (4,0) -- (5,0) ;
\draw[fill](1,0) circle(.1);
\draw[fill](2,0) circle(.1);
\draw[fill](3,-1) circle(.1);
\draw[fill](4,0) circle(.1);
\draw[fill](5,0) circle(.1);
\end{tikzpicture}
\,\begin{tikzpicture}[scale=.3]
\draw[thin,<->] (5.3,0) -- (0,0) -- (0,2.5) ;
\draw[thin,->] (0,0) -- (0,-2.5) ;
\draw (0,0)  -- (1,0) -- (2,0) -- (3,-1) -- (4,-1) -- (5,0) ;
\draw[fill](1,0) circle(.1);
\draw[fill](2,0) circle(.1);
\draw[fill](3,-1) circle(.1);
\draw[fill](4,-1) circle(.1);
\draw[fill](5,0) circle(.1);
\end{tikzpicture}
\,\begin{tikzpicture}[scale=.3]
\draw[thin,<->] (5.3,0) -- (0,0) -- (0,2.5) ;
\draw[thin,->] (0,0) -- (0,-2.5) ;
\draw (0,0)  -- (1,0) -- (2,0) -- (3,0) -- (4,-1) -- (5,0) ;
\draw[fill](1,0) circle(.1);
\draw[fill](2,0) circle(.1);
\draw[fill](3,0) circle(.1);
\draw[fill](4,-1) circle(.1);
\draw[fill](5,0) circle(.1);
\end{tikzpicture}
\,\begin{tikzpicture}[scale=.3]
\draw[thin,<-] (5.3,0) -- (5,0) ;
\draw[thin,->] (4,0) -- (0,0) -- (0,2.5) ;
\draw[thin,->] (0,0) -- (0,-2.5) ;
\draw (0,0)  -- (1,-1) -- (2,-2) -- (3,-1) ;
\draw[red, very thick, densely dashed] (3,-1) -- (4,0) -- (5,0) ;
\draw[fill](1,-1) circle(.1);
\draw[fill](2,-2) circle(.1);
\draw[fill](3,-1) circle(.1);
\draw[fill](4,0) circle(.1);
\draw[fill](5,0) circle(.1);
\end{tikzpicture}
\,\begin{tikzpicture}[scale=.3]
\draw[thin,<-] (5.3,0) -- (5,0) ;
\draw[thin,->] (4,0) -- (0,0) -- (0,2.5) ;
\draw[thin,->] (0,0) -- (0,-2.5) ;
\draw (0,0)  -- (1,-1) -- (2,0) -- (3,-1) ;
\draw[red, very thick, densely dashed] (3,-1) -- (4,0) -- (5,0) ;
\draw[fill](1,-1) circle(.1);
\draw[fill](2,0) circle(.1);
\draw[fill](3,-1) circle(.1);
\draw[fill](4,0) circle(.1);
\draw[fill](5,0) circle(.1);
\end{tikzpicture}
\,\begin{tikzpicture}[scale=.3]
\draw[thin,<->] (5.3,0) -- (0,0) -- (0,2.5) ;
\draw[thin,->] (0,0) -- (0,-2.5) ;
\draw (0,0)  -- (1,-1) -- (2,-2) ;
\draw[red, very thick, densely dashed]  (2,-2) -- (3,-1) -- (4,-1) ;
\draw (4,-1) -- (5,0) ;
\draw[fill](1,-1) circle(.1);
\draw[fill](2,-2) circle(.1);
\draw[fill](3,-1) circle(.1);
\draw[fill](4,-1) circle(.1);
\draw[fill](5,0) circle(.1);
\end{tikzpicture}\\[-2.5pt]
&\begin{tikzpicture}[scale=.3]
\draw[thin,<->] (5.3,0) -- (0,0) -- (0,2.5) ;
\draw[thin,->] (0,0) -- (0,-2.5) ;
\draw (0,0)  -- (1,-1) -- (2,0) -- (3,-1) -- (4,-1) -- (5,0) ;
\draw[fill](1,-1) circle(.1);
\draw[fill](2,0) circle(.1);
\draw[fill](3,-1) circle(.1);
\draw[fill](4,-1) circle(.1);
\draw[fill](5,0) circle(.1);
\end{tikzpicture}
\,\begin{tikzpicture}[scale=.3]
\draw[thin,<->] (5.3,0) -- (0,0) -- (0,2.5) ;
\draw[thin,->] (0,0) -- (0,-2.5) ;
\draw (0,0)  -- (1,-1) -- (2,-2) -- (3,-2) -- (4,-1) -- (5,0) ;
\draw[fill](1,-1) circle(.1);
\draw[fill](2,-2) circle(.1);
\draw[fill](3,-2) circle(.1);
\draw[fill](4,-1) circle(.1);
\draw[fill](5,0) circle(.1);
\end{tikzpicture}
\,\begin{tikzpicture}[scale=.3]
\draw[thin,<-] (5.3,0) -- (3,0) ;
\draw[thin,->] (2,0) -- (0,0) -- (0,2.5) ;
\draw[thin,->] (0,0) -- (0,-2.5) ;
\draw (0,0)  -- (1,-1) ;
\draw[red, very thick, densely dashed] (1,-1) -- (2,0) -- (3,0) ;
\draw (3,0) -- (4,-1) -- (5,0) ;
\draw[fill](1,-1) circle(.1);
\draw[fill](2,0) circle(.1);
\draw[fill](3,0) circle(.1);
\draw[fill](4,-1) circle(.1);
\draw[fill](5,0) circle(.1);
\end{tikzpicture}
\,\begin{tikzpicture}[scale=.3]
\draw[thin,<->] (5.3,0) -- (0,0) -- (0,2.5) ;
\draw[thin,->] (0,0) -- (0,-2.5) ;
\draw (0,0)  -- (1,-1) -- (2,-1) -- (3,-2) -- (4,-1) -- (5,0) ;
\draw[fill](1,-1) circle(.1);
\draw[fill](2,-1) circle(.1);
\draw[fill](3,-2) circle(.1);
\draw[fill](4,-1) circle(.1);
\draw[fill](5,0) circle(.1);
\end{tikzpicture}
\,\begin{tikzpicture}[scale=.3]
\draw[thin,<->] (5.3,0) -- (0,0) -- (0,2.5) ;
\draw[thin,->] (0,0) -- (0,-2.5) ;
\draw (0,0)  -- (1,-1) -- (2,-1) -- (3,0) -- (4,-1) -- (5,0) ;
\draw[fill](1,-1) circle(.1);
\draw[fill](2,-1) circle(.1);
\draw[fill](3,0) circle(.1);
\draw[fill](4,-1) circle(.1);
\draw[fill](5,0) circle(.1);
\end{tikzpicture}
\,\begin{tikzpicture}[scale=.3]
\draw[thin,<->] (5.3,0) -- (0,0) -- (0,2.5) ;
\draw[thin,->] (0,0) -- (0,-2.5) ;
\draw (0,0)  -- (1,0) -- (2,-1) -- (3,-2) -- (4,-1) -- (5,0) ;
\draw[fill](1,0) circle(.1);
\draw[fill](2,-1) circle(.1);
\draw[fill](3,-2) circle(.1);
\draw[fill](4,-1) circle(.1);
\draw[fill](5,0) circle(.1);
\end{tikzpicture}
\,\begin{tikzpicture}[scale=.3]
\draw[thin,<->] (5.3,0) -- (0,0) -- (0,2.5) ;
\draw[thin,->] (0,0) -- (0,-2.5) ;
\draw (0,0)  -- (1,0) -- (2,-1) -- (3,0) -- (4,-1) -- (5,0) ;
\draw[fill](1,0) circle(.1);
\draw[fill](2,-1) circle(.1);
\draw[fill](3,0) circle(.1);
\draw[fill](4,-1) circle(.1);
\draw[fill](5,0) circle(.1);
\end{tikzpicture}\\[2.5pt]
\hline\\[-2.5pt]
&\begin{tikzpicture}[scale=.265]
\draw[thin,<->] (5.3,0) -- (0,0) -- (0,2.5) ;
\draw[thin,->] (0,0) -- (0,-2.5) ;
\draw (0,0)  -- (1,1) -- (2,0) -- (3,0) -- (4,0) -- (5,0) ;
\draw[fill](1,1) circle(.1);
\draw[fill](2,0) circle(.1);
\draw[fill](3,0) circle(.1);
\draw[fill](4,0) circle(.1);
\draw[fill](5,0) circle(.1);
\end{tikzpicture}
\,\begin{tikzpicture}[scale=.265]
\draw[thin,<->] (5.3,0) -- (0,0) -- (0,2.5) ;
\draw[thin,->] (0,0) -- (0,-2.5) ;
\draw[red, very thick, densely dashed] (0,0)  -- (1,1) -- (2,1) ;
\draw (2,1) -- (3,0) -- (4,0) -- (5,0) ;
\draw[fill](1,1) circle(.1);
\draw[fill](2,1) circle(.1);
\draw[fill](3,0) circle(.1);
\draw[fill](4,0) circle(.1);
\draw[fill](5,0) circle(.1);
\end{tikzpicture}
\,\begin{tikzpicture}[scale=.265]
\draw[thin,<->] (5.3,0) -- (0,0) -- (0,2.5) ;
\draw[thin,->] (0,0) -- (0,-2.5) ;
\draw[red, very thick, densely dashed] (0,0)  -- (1,1) -- (2,1) ;
\draw (2,1) -- (3,1) -- (4,0) -- (5,0) ;
\draw[fill](1,1) circle(.1);
\draw[fill](2,1) circle(.1);
\draw[fill](3,1) circle(.1);
\draw[fill](4,0) circle(.1);
\draw[fill](5,0) circle(.1);
\end{tikzpicture}
\,\begin{tikzpicture}[scale=.265]
\draw[thin,<->] (5.3,0) -- (0,0) -- (0,2.5) ;
\draw[thin,->] (0,0) -- (0,-2.5) ;
\draw[red, very thick, densely dashed] (0,0)  -- (1,1) -- (2,1) ;
\draw (2,1) -- (3,1) -- (4,1) -- (5,0) ;
\draw[fill](1,1) circle(.1);
\draw[fill](2,1) circle(.1);
\draw[fill](3,1) circle(.1);
\draw[fill](4,1) circle(.1);
\draw[fill](5,0) circle(.1);
\end{tikzpicture}
\,\begin{tikzpicture}[scale=.265]
\draw[thin,<->] (5.3,0) -- (0,0) -- (0,2.5) ;
\draw[thin,->] (0,0) -- (0,-2.5) ;
\draw (0,0)  -- (1,0) -- (2,1) -- (3,0) -- (4,0) -- (5,0) ;
\draw[fill](1,0) circle(.1);
\draw[fill](2,1) circle(.1);
\draw[fill](3,0) circle(.1);
\draw[fill](4,0) circle(.1);
\draw[fill](5,0) circle(.1);
\end{tikzpicture}
\,\begin{tikzpicture}[scale=.265]
\draw[thin,<->] (5.3,0) -- (0,0) -- (0,2.5) ;
\draw[thin,->] (0,0) -- (0,-2.5) ;
\draw (0,0)  -- (1,0) ;
\draw[red, very thick, densely dashed] (1,0) -- (2,1) -- (3,1) ;
\draw (3,1) -- (4,0) -- (5,0) ;
\draw[fill](1,0) circle(.1);
\draw[fill](2,1) circle(.1);
\draw[fill](3,1) circle(.1);
\draw[fill](4,0) circle(.1);
\draw[fill](5,0) circle(.1);
\end{tikzpicture}
\,\begin{tikzpicture}[scale=.265]
\draw[thin,<->] (5.3,0) -- (0,0) -- (0,2.5) ;
\draw[thin,->] (0,0) -- (0,-2.5) ;
\draw (0,0)  -- (1,0) ;
\draw[red, very thick, densely dashed] (1,0) -- (2,1) -- (3,1) ;
\draw (3,1) -- (4,1) -- (5,0) ;
\draw[fill](1,0) circle(.1);
\draw[fill](2,1) circle(.1);
\draw[fill](3,1) circle(.1);
\draw[fill](4,1) circle(.1);
\draw[fill](5,0) circle(.1);
\end{tikzpicture}
\,\begin{tikzpicture}[scale=.265]
\draw[thin,<->] (5.3,0) -- (0,0) -- (0,2.5) ;
\draw[thin,->] (0,0) -- (0,-2.5) ;
\draw (0,0)  -- (1,0) -- (2,0) -- (3,1) -- (4,0) -- (5,0) ;
\draw[fill](1,0) circle(.1);
\draw[fill](2,0) circle(.1);
\draw[fill](3,1) circle(.1);
\draw[fill](4,0) circle(.1);
\draw[fill](5,0) circle(.1);
\end{tikzpicture}\\[-2.5pt]
&\begin{tikzpicture}[scale=.265]
\draw[thin,<-] (5.3,0) -- (2,0) ;
\draw[thin,->] (1,0) -- (0,0) -- (0,2.5) ;
\draw[thin,->] (0,0) -- (0,-2.5) ;
\draw (0,0)  -- (1,0) -- (2,0) ;
\draw[red, very thick, densely dashed]  (2,0) -- (3,1) -- (4,1) ;
\draw (4,1) -- (5,0) ;
\draw[fill](1,0) circle(.1);
\draw[fill](2,0) circle(.1);
\draw[fill](3,1) circle(.1);
\draw[fill](4,1) circle(.1);
\draw[fill](5,0) circle(.1);
\end{tikzpicture}
\,\begin{tikzpicture}[scale=.265]
\draw[thin,<-] (5.3,0) -- (3,0) ;
\draw[thin,->] (2,0) -- (0,0) -- (0,2.5) ;
\draw[thin,->] (0,0) -- (0,-2.5) ;
\draw (0,0)  -- (1,0) -- (2,0) -- (3,0) -- (4,1) -- (5,0) ;
\draw[fill](1,0) circle(.1);
\draw[fill](2,0) circle(.1);
\draw[fill](3,0) circle(.1);
\draw[fill](4,1) circle(.1);
\draw[fill](5,0) circle(.1);
\end{tikzpicture}
\,\begin{tikzpicture}[scale=.265]
\draw[thin,<->] (5.3,0) -- (0,0) -- (0,2.5) ;
\draw[thin,->] (0,0) -- (0,-2.5) ;
\draw (0,0)  -- (1,-1) -- (2,0) -- (3,1) -- (4,0) -- (5,0) ;
\draw[fill](1,-1) circle(.1);
\draw[fill](2,0) circle(.1);
\draw[fill](3,1) circle(.1);
\draw[fill](4,0) circle(.1);
\draw[fill](5,0) circle(.1);
\end{tikzpicture}
\,\begin{tikzpicture}[scale=.265]
\draw[thin,<->] (5.3,0) -- (0,0) -- (0,2.5) ;
\draw[thin,->] (0,0) -- (0,-2.5) ;
\draw (0,0)  -- (1,1) -- (2,0) -- (3,-1) ;
\draw[red, very thick, densely dashed] (3,-1) -- (4,0) -- (5,0) ;
\draw[fill](1,1) circle(.1);
\draw[fill](2,0) circle(.1);
\draw[fill](3,-1) circle(.1);
\draw[fill](4,0) circle(.1);
\draw[fill](5,0) circle(.1);
\end{tikzpicture}
\,\begin{tikzpicture}[scale=.265]
\draw[thin,<->] (5.3,0) -- (0,0) -- (0,2.5) ;
\draw[thin,->] (0,0) -- (0,-2.5) ;
\draw (0,0)  -- (1,1) -- (2,0) -- (3,1) -- (4,0) -- (5,0) ;
\draw[fill](1,1) circle(.1);
\draw[fill](2,0) circle(.1);
\draw[fill](3,1) circle(.1);
\draw[fill](4,0) circle(.1);
\draw[fill](5,0) circle(.1);
\end{tikzpicture}
\,\begin{tikzpicture}[scale=.265]
\draw[thin,<->] (5.3,0) -- (0,0) -- (0,2.5) ;
\draw[thin,->] (0,0) -- (0,-2.5) ;
\draw (0,0)  -- (1,1) -- (2,2) -- (3,1) -- (4,0) -- (5,0) ;
\draw[fill](1,1) circle(.1);
\draw[fill](2,2) circle(.1);
\draw[fill](3,1) circle(.1);
\draw[fill](4,0) circle(.1);
\draw[fill](5,0) circle(.1);
\end{tikzpicture}
\,\begin{tikzpicture}[scale=.265]
\draw[thin,<->] (5.3,0) -- (0,0) -- (0,2.5) ;
\draw[thin,->] (0,0) -- (0,-2.5) ;
\draw (0,0)  -- (1,-1) -- (2,0) ;
\draw[red, very thick, densely dashed] (2,0) -- (3,1) -- (4,1) ;
\draw (4,1) -- (5,0) ;
\draw[fill](1,-1) circle(.1);
\draw[fill](2,0) circle(.1);
\draw[fill](3,1) circle(.1);
\draw[fill](4,1) circle(.1);
\draw[fill](5,0) circle(.1);
\end{tikzpicture}
\,\begin{tikzpicture}[scale=.265]
\draw[thin,<->] (5.3,0) -- (0,0) -- (0,2.5) ;
\draw[thin,->] (0,0) -- (0,-2.5) ;
\draw (0,0)  -- (1,1) -- (2,0) -- (3,-1) -- (4,-1) -- (5,0) ;
\draw[fill](1,1) circle(.1);
\draw[fill](2,0) circle(.1);
\draw[fill](3,-1) circle(.1);
\draw[fill](4,-1) circle(.1);
\draw[fill](5,0) circle(.1);
\end{tikzpicture}\\[-2.5pt]
&\begin{tikzpicture}[scale=.265]
\draw[thin,<->] (5.3,0) -- (0,0) -- (0,2.5) ;
\draw[thin,->] (0,0) -- (0,-2.5) ;
\draw (0,0)  -- (1,1) -- (2,0) ;
\draw[red, very thick, densely dashed] (2,0) -- (3,1) -- (4,1) ;
\draw (4,1) -- (5,0) ;
\draw[fill](1,1) circle(.1);
\draw[fill](2,0) circle(.1);
\draw[fill](3,1) circle(.1);
\draw[fill](4,1) circle(.1);
\draw[fill](5,0) circle(.1);
\end{tikzpicture}
\,\begin{tikzpicture}[scale=.265]
\draw[thin,<->] (5.3,0) -- (0,0) -- (0,2.5) ;
\draw[thin,->] (0,0) -- (0,-2.5) ;
\draw (0,0)  -- (1,1) -- (2,2) -- (3,1) -- (4,1) -- (5,0) ;
\draw[fill](1,1) circle(.1);
\draw[fill](2,2) circle(.1);
\draw[fill](3,1) circle(.1);
\draw[fill](4,1) circle(.1);
\draw[fill](5,0) circle(.1);
\end{tikzpicture}
\,\begin{tikzpicture}[scale=.265]
\draw[thin,<-] (5.3,0) -- (3,0) ;
\draw[thin,->] (2,0) -- (0,0) -- (0,2.5) ;
\draw[thin,->] (0,0) -- (0,-2.5) ;
\draw (0,0)  -- (1,-1) ;
\draw[red, very thick, densely dashed] (1,-1) -- (2,0) -- (3,0) ;
\draw (3,0) -- (4,1) -- (5,0) ;
\draw[fill](1,-1) circle(.1);
\draw[fill](2,0) circle(.1);
\draw[fill](3,0) circle(.1);
\draw[fill](4,1) circle(.1);
\draw[fill](5,0) circle(.1);
\end{tikzpicture}
\,\begin{tikzpicture}[scale=.265]
\draw[thin,<->] (5.3,0) -- (0,0) -- (0,2.5) ;
\draw[thin,->] (0,0) -- (0,-2.5) ;
\draw (0,0)  -- (1,1) -- (2,0) -- (3,0) -- (4,-1) -- (5,0) ;
\draw[fill](1,1) circle(.1);
\draw[fill](2,0) circle(.1);
\draw[fill](3,0) circle(.1);
\draw[fill](4,-1) circle(.1);
\draw[fill](5,0) circle(.1);
\end{tikzpicture}
\,\begin{tikzpicture}[scale=.265]
\draw[thin,<->] (5.3,0) -- (0,0) -- (0,2.5) ;
\draw[thin,->] (0,0) -- (0,-2.5) ;
\draw (0,0)  -- (1,1) -- (2,0) -- (3,0) -- (4,1) -- (5,0) ;
\draw[fill](1,1) circle(.1);
\draw[fill](2,0) circle(.1);
\draw[fill](3,0) circle(.1);
\draw[fill](4,1) circle(.1);
\draw[fill](5,0) circle(.1);
\end{tikzpicture}
\,\begin{tikzpicture}[scale=.265]
\draw[thin,<->] (5.3,0) -- (0,0) -- (0,2.5) ;
\draw[thin,->] (0,0) -- (0,-2.5) ;
\draw (0,0)  -- (1,1) ;
\draw[red, very thick, densely dashed] (1,1) -- (2,2) -- (3,2) ;
\draw (3,2) -- (4,1) -- (5,0) ;
\draw[fill](1,1) circle(.1);
\draw[fill](2,2) circle(.1);
\draw[fill](3,2) circle(.1);
\draw[fill](4,1) circle(.1);
\draw[fill](5,0) circle(.1);
\end{tikzpicture}
\,\begin{tikzpicture}[scale=.265]
\draw[thin,<->] (5.3,0) -- (0,0) -- (0,2.5) ;
\draw[thin,->] (0,0) -- (0,-2.5) ;
\draw (0,0)  -- (1,-1) -- (2,-1) -- (3,0) -- (4,1) -- (5,0) ;
\draw[fill](1,-1) circle(.1);
\draw[fill](2,-1) circle(.1);
\draw[fill](3,0) circle(.1);
\draw[fill](4,1) circle(.1);
\draw[fill](5,0) circle(.1);
\end{tikzpicture}
\,\begin{tikzpicture}[scale=.265]
\draw[thin,<->] (5.3,0) -- (0,0) -- (0,2.5) ;
\draw[thin,->] (0,0) -- (0,-2.5) ;
\draw[red, very thick, densely dashed]  (0,0)  -- (1,1) -- (2,1) ;
\draw (2,1) -- (3,0) -- (4,-1) -- (5,0) ;
\draw[fill](1,1) circle(.1);
\draw[fill](2,1) circle(.1);
\draw[fill](3,0) circle(.1);
\draw[fill](4,-1) circle(.1);
\draw[fill](5,0) circle(.1);
\end{tikzpicture}\\[-2.5pt]
&\begin{tikzpicture}[scale=.265]
\draw[thin,<->] (5.3,0) -- (0,0) -- (0,2.5) ;
\draw[thin,->] (0,0) -- (0,-2.5) ;
\draw[red, very thick, densely dashed] (0,0)  -- (1,1) -- (2,1) ;
\draw (2,1) -- (3,0) -- (4,1) -- (5,0) ;
\draw[fill](1,1) circle(.1);
\draw[fill](2,1) circle(.1);
\draw[fill](3,0) circle(.1);
\draw[fill](4,1) circle(.1);
\draw[fill](5,0) circle(.1);
\end{tikzpicture}
\,\begin{tikzpicture}[scale=.265]
\draw[thin,<->] (5.3,0) -- (0,0) -- (0,2.5) ;
\draw[thin,->] (0,0) -- (0,-2.5) ;
\draw[red, very thick, densely dashed] (0,0)  -- (1,1) -- (2,1) ;
\draw (2,1) -- (3,2) -- (4,1) -- (5,0) ;
\draw[fill](1,1) circle(.1);
\draw[fill](2,1) circle(.1);
\draw[fill](3,2) circle(.1);
\draw[fill](4,1) circle(.1);
\draw[fill](5,0) circle(.1);
\end{tikzpicture}
\,\begin{tikzpicture}[scale=.265]
\draw[thin,<->] (5.3,0) -- (0,0) -- (0,2.5) ;
\draw[thin,->] (0,0) -- (0,-2.5) ;
\draw (0,0)  -- (1,0) -- (2,-1) -- (3,0) -- (4,1) -- (5,0) ;
\draw[fill](1,0) circle(.1);
\draw[fill](2,-1) circle(.1);
\draw[fill](3,0) circle(.1);
\draw[fill](4,1) circle(.1);
\draw[fill](5,0) circle(.1);
\end{tikzpicture}
\,\begin{tikzpicture}[scale=.265]
\draw[thin,<->] (5.3,0) -- (0,0) -- (0,2.5) ;
\draw[thin,->] (0,0) -- (0,-2.5) ;
\draw (0,0)  -- (1,0) -- (2,1) -- (3,0) -- (4,-1) -- (5,0) ;
\draw[fill](1,0) circle(.1);
\draw[fill](2,1) circle(.1);
\draw[fill](3,0) circle(.1);
\draw[fill](4,-1) circle(.1);
\draw[fill](5,0) circle(.1);
\end{tikzpicture}
\,\begin{tikzpicture}[scale=.265]
\draw[thin,<->] (5.3,0) -- (0,0) -- (0,2.5) ;
\draw[thin,->] (0,0) -- (0,-2.5) ;
\draw (0,0)  -- (1,0) -- (2,1) -- (3,0) -- (4,1) -- (5,0) ;
\draw[fill](1,0) circle(.1);
\draw[fill](2,1) circle(.1);
\draw[fill](3,0) circle(.1);
\draw[fill](4,1) circle(.1);
\draw[fill](5,0) circle(.1);
\end{tikzpicture}
\,\begin{tikzpicture}[scale=.265]
\draw[thin,<->] (5.3,0) -- (0,0) -- (0,2.5) ;
\draw[thin,->] (0,0) -- (0,-2.5) ;
\draw (0,0)  -- (1,0) -- (2,1) -- (3,2) -- (4,1) -- (5,0) ;
\draw[fill](1,0) circle(.1);
\draw[fill](2,1) circle(.1);
\draw[fill](3,2) circle(.1);
\draw[fill](4,1) circle(.1);
\draw[fill](5,0) circle(.1);
\end{tikzpicture}
\end{align*}
\caption{Motzkin and big Motzkin paths}
\end{figure}
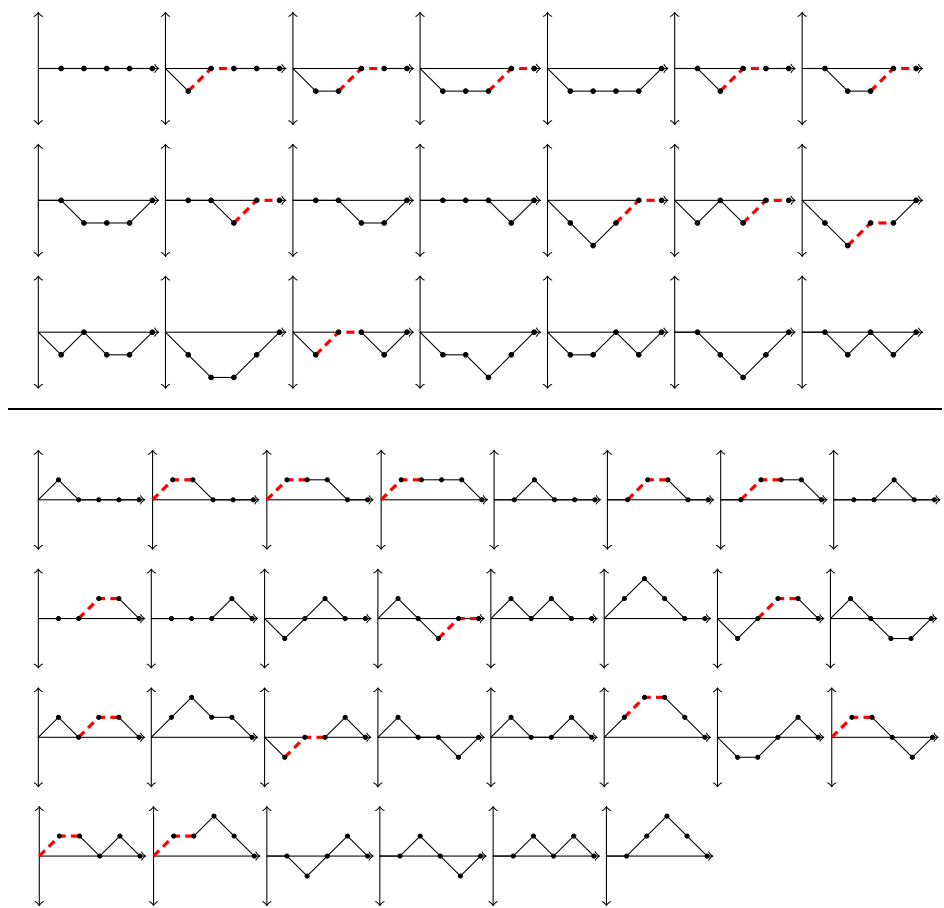

\subsection{Restricted central Delannoy and big Motzkin paths}
We now count paths where given subsequences of paths, namely the subsequences $U\!H$ and $U\!F$, are not allowed. The first ones are, respectively, the \hbox{\emph{UH-avoiding}} big Motzkin paths and the \emph{UH-avoiding} Motzkin paths. In Figure~2 $U\!H$ subsequences and in Figure~3 $U\!F$ subsequences are dashed. Note that these paths are determined by sequences formed only by the $d$ down steps and the $n-2d$ forward steps, and hence its numbers are, respectively, $\sum_{d=0}^n \binom{n-d}{n-2d}\binom{2d}{d}$ and $\sum_{d=0}^n \frac{1}{d+1}\binom{n-d}{n-2d}\binom{2d}{d}$. Thus, UH-avoiding big Motzkin paths and UH-avoiding Motzkin paths, respectively, form the OEIS sequences
\begin{align*}
\text{\href{https://oeis.org/A026569}{A026569}:}&\text{\footnotesize$\Big(\textstyle\sum\limits_{d=0}^n \binom{n-d}{d}\binom{2d}{d}\Big)_{n\geq0}=(1, 1, 3, 5, 13,
\textbf{27}, 67, 153, 375, 893, 2\,189,\dotsc)$}\\
\text{\href{https://oeis.org/A090344}{A090344}:}&\text{\footnotesize$\Big(\textstyle\sum\limits_{d=0}^n \frac{1}{d+1}\binom{n-d}{d}\binom{2d}{d}\Big)_{n\geq0}=(1, 
1, 2, 3, 6, \textbf{11}, 23, 47, 102, 221, 493,\dotsc)$}
\end{align*}

Central Delannoy paths and Schr\"oder paths with $d$ down steps also contain $d$ up steps, but now contain $n-d$ forward steps, and so the sequences are OEIS sequence
\begin{align*}
\text{\href{https://oeis.org/A026375}{A026375}:}&\ \Big(\textstyle\sum\limits_{d=0}^n \binom{n}{d}\binom{2d}{d}\Big)_{n\in\N_0} 
=\text{\small$(1, 3, \textbf{11}, 45, 195, 873, 3\,989, 18\,483,\dotsc)$}
\shortintertext{and OEIS sequence}
\text{\href{https://oeis.org/A007317}{A007317}:}&\ \Big(\textstyle\sum\limits_{d=0}^n \frac{1}{d+1}\binom{n}{d}\binom{2d}{d}\Big)_{n\in\N_0}
=\text{\small$(1, 2, \textbf{5}, 15, 51, 188, 731, 2\,950, \dotsc)$}
\end{align*}

We note that, for every lattice path sequence $V=\big(v_n\big)_{n\in\N_0}$ defined above, we have found a double infinite triangular array $T=\big(b_{n,d}\big)_{0\leq d\leq n}$ such that
\[v_n=\sum_{d=0}^n b_{n,d}\,u_d\quad \text{for every $n\in\N_0$}\]
or, in other words, such that $V^T=T\cdot U^T$, where $U=\big(u_n\big)_{n\in\N_0}$ is either the central lattice path sequence or the Dyck path sequence. In fact, by Lemma~\ref{lemma.MV}, below, in all cases, for two given generating functions $f(x)$ and $g(x)$,
\begin{align*}
&b_{n,d}=[x^n]\big(f(x)(x\,g(x))^d\big)\quad\text{for $0\leq d\leq n$}\,,
\shortintertext{or, equivalently,}
&\sum_{n\geq d}b_{n,d}\, x^n=f(x)(x\,g(x))^d\,.
\end{align*} 
We shorten notations by writing the \emph{Riordan array} $(f(x)\mid g(x))$ for $T=\big(b_{n,d}\big)_{0\leq d\leq n}$. We can now obtain the generating functions of these sequences by adequately transforming \eqref{eq.fgCLP} and \eqref{eq.fgCatalan}. In fact, if $B(x)=\sum_{n\geq0}v_n x^n$ and $A(x)=\sum_{n\geq0}u_n x^n$, then (see~\cite{Riordan})
\begin{align}
B(x)
&=\sum_{n\geq0}\Big(\text{\footnotesize$\sum_{d=0}^n$} b_{n,d}\, u_d\Big)\,x^n\nonumber\\
&=\sum_{d\geq0} u_d\Big(\text{\footnotesize$\sum_{n\geq d}$}b_{n,d}\,x^n\Big)\nonumber\\
&=\sum_{d\geq0}u_d\,f(x)\big(x\,g(x)\big)^d\nonumber\\
&=f(x)\Big(\text{\footnotesize$\sum_{d\geq0}$}u_d\big(x\,g(x)\big)^d\Big)\,,\nonumber\\
\shortintertext{that is,}
B(x)&=f(x)A\big(x\,g(x)\big)\,.\label{eq.riordan}
\end{align} 

We note that, in the case of the restricted central Delannoy and big Motzkin paths, this unifies and clarifies 
entries \href{https://oeis.org/A026569}{A026569}, \href{https://oeis.org/A090344}{A090344},
\href{https://oeis.org/A026375}{A026375}, and \href{https://oeis.org/A007317}{A007317} of the OEIS (Cf.~\cite{Roitner,Yan}), and generally simplifies the corresponding generating functions.

\begin{lemma}\label{lemma.MV}
\begin{align*}
&\text{\small$\left(\frac{1}{1-x}\,\middle|\, \frac{1}{(1-x)^2}\right)=\tri{n+d}{n-d}$}\quad;\\
&\text{\small$\left(\frac{1}{1-x}\,\middle|\, \frac{x}{(1-x)^2}\right)=\tri{n}{2d}$}\quad;\\ 
&\text{\small$\left(\frac{1}{1-x}\,\middle|\, \frac{x}{1-x}\right)=\tri{n-d}{d}$}\quad;\\ 
&\text{\small$\left(\frac{1}{1-x}\,\middle|\, \frac{1}{1-x}\right)=\tri{n}{d}$}\quad.
\end{align*}
\end{lemma}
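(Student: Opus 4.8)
The plan is to unwind the definition of the Riordan array and reduce each of the four identities to a single coefficient extraction from a rational function of the shape $x^{a}/(1-x)^{b}$, which is then read off from the negative binomial series.

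First I would recall that, by the defining relation $\sum_{n\geq d}b_{n,d}\,x^n=f(x)(x\,g(x))^d$ established before the statement, the $(n,d)$ entry of $(f(x)\mid g(x))$ is $[x^n]\bigl(f(x)(x\,g(x))^d\bigr)$, and that for every nonnegative integer $m$ one has the generating function identity $\frac{1}{(1-x)^{m+1}}=\sum_{k\geq0}\binom{k+m}{m}x^k$, itself an instance of the generalized Newton binomial theorem already used in \eqref{eq.fgCLP}. With $f(x)=\frac{1}{1-x}$ in all four cases, the product $f(x)(x\,g(x))^d$ collapses to $x^{ad}/(1-x)^{bd+1}$ for suitable small integers $a,b$: namely $(a,b)=(1,2)$ for $g=\frac{1}{(1-x)^2}$, $(a,b)=(2,2)$ for $g=\frac{x}{(1-x)^2}$, $(a,b)=(2,1)$ for $g=\frac{x}{1-x}$, and $(a,b)=(1,1)$ for $g=\frac{1}{1-x}$.

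Then I would extract the coefficient: for $0\leq d\leq n$ one gets $[x^n]\,x^{ad}/(1-x)^{bd+1}=[x^{\,n-ad}]\,1/(1-x)^{bd+1}=\binom{n-ad+bd}{bd}$ when $n\geq ad$, and $0$ otherwise, the latter being consistent with the vanishing (when it occurs) of the binomial coefficient claimed in the statement. Substituting the four pairs $(a,b)$ yields $\binom{n+d}{2d}$, $\binom{n}{2d}$, $\binom{n-d}{d}$, and $\binom{n}{d}$ respectively; the last three are exactly the stated entries, and for the first I would invoke the symmetry $\binom{n+d}{2d}=\binom{n+d}{(n+d)-2d}=\binom{n+d}{n-d}$ to recover the form in the lemma. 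The computation is entirely routine; the only points needing care are keeping track of the exponents of $x$ and of $(1-x)$ after clearing the powers of $x\,g(x)$, and checking that the out-of-range cases $n<ad$ (where the series contributes nothing) agree with the corresponding binomial coefficients for all $0\leq d\leq n$. I do not expect any genuine obstacle.
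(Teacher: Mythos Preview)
Your proposal is correct and follows essentially the same approach as the paper's proof: reduce each Riordan entry to a coefficient extraction of $x^{ad}/(1-x)^{bd+1}$ and read it off via the negative binomial series. The only difference is cosmetic---you package the four cases with a single pair of parameters $(a,b)$ and explicitly check the out-of-range cases $n<ad$, whereas the paper spells out the first identity in full and then lists the key coefficient extractions for the remaining three.
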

\begin{proof}
Note that, by the generalized Newton binomial theorem again, if $\alpha=-n$, $n\in\N$, since
$\binom{\alpha}{k}=\frac{(-n)(-n-1)\dotsb (-n-k+1)}{k!}=(-1)^k\binom{n+k-1}{k}$,
\[(1-x)^{-n}=\sum_{k\geq0}\binom{n+k-1}{k}\,x^k\,.\]

Let us prove the first identity. For $f(x)=\frac{1}{1-x}$ and $g(x)=\frac{1}{(1-x)^2}$,
\begin{align*}
[x^n]\big(f(x)(x\,g(x))^d\big) &=[x^n]\left(\text{\footnotesize$\displaystyle\frac{x^d}{(1-x)^{2d+1}}$}\right)\\
&=[x^{n-d}]\left(\text{\footnotesize$\frac{1}{(1-x)^{2d+1}}$}\right)\\
&=[x^{n-d}]\text{\footnotesize$\sum_{m\geq0}\binom{2d+m}{m}\,x^m$}\\
&=\text{\footnotesize$\binom{2d+n-d}{n-d}$}\\
&=\text{\footnotesize$\binom{n+d}{n-d}$}.
\end{align*}
The other identities are proven similarly, being 
\begin{align*}
&[x^n]\left(\text{\footnotesize$\displaystyle\frac{1}{1-x}\left(\frac{x^2}{(1-x)^2} \right)^d$}\right)=
[x^{n-2d}]\text{\footnotesize$\left(\displaystyle\sum_{m\geq0}\binom{2d+m}{m}\,x^m\right)$}\,,\\
&[x^n]\left(\text{\footnotesize$\displaystyle\frac{1}{1-x}\left(\frac{x^2}{1-x} \right)^d$}\right)=
[x^{n-2d}]\text{\footnotesize$\left(\displaystyle\sum_{m\geq0}\binom{d+m}{m}\,x^m\right)$}\,,\\
&[x^n]\left(\text{\footnotesize$\displaystyle\frac{1}{1-x}\left(\frac{x}{1-x} \right)^d$}\right)=
[x^{n-d}]\text{\footnotesize$\left(\displaystyle\sum_{m\geq0}\binom{d+m}{m}\,x^m\right)$}\,.\qedhere
\end{align*}
\end{proof}

Then, the generating function of the sequence of the central Delannoy numbers is, by \eqref{eq.riordan},
\begin{align*}
&\text{\footnotesize$\frac{1}{1-x}\ \frac{1}{\sqrt{1-4\frac{x}{(1-x)^2}}}$}=\frac{1}{\sqrt{1-6 x+x^2}}
\shortintertext{whereas the generating function of the sequence of the central Schr\"oder numbers is}
&\text{\footnotesize$\frac{1}{1-x}\ \frac{2}{1+\sqrt{1-4\frac{x}{(1-x)^2}}}$}= \frac{2}{1-x+\sqrt{1-6 x+x^2}}\,.
\shortintertext{The generating functions of the sequences of 
UF-avoiding central Delannoy numbers and of UF-avoiding Schr\"oder numbers are, respectively}
&\text{\footnotesize$\frac{1}{1-x}\ \frac{1}{\sqrt{1-4\frac{x}{1-x}}}$}= \frac{1}{\sqrt{1-6x+5x^2}}\\
\shortintertext{and}
&\text{\footnotesize$\frac{2}{(1-x)\left(1+\sqrt{1-4\frac{x}{1-x}}\right)}$} = \frac{2}{1-x+\sqrt{1-6x+5x^2}}\,.
\end{align*}

Likewise, the generating function of the sequence of the big Motzkin numbers is
\begin{align*}
&\text{\footnotesize$\frac{1}{1-x}\ \frac{1}{\sqrt{1-4\frac{x^2}{(1-x)^2}}}$} = \frac{1}{\sqrt{1-2x-3 x^2}}
\shortintertext{the generating function of the sequence of the Motzkin numbers is}
&\text{\footnotesize$\frac{2}{(1-x)\left(1+\sqrt{1-4\frac{x^2}{(1-x)^2}}\right)}$}=\frac{2}{1-x+\sqrt{1-2x-3 x^2}}
\end{align*}
and the generating function of the sequence of 
UH-avoiding big Motzkin and Motzkin numbers are, respectively
\begin{align*}
&\text{\footnotesize$\frac{1}{1-x}\ \frac{1}{\sqrt{1-4\frac{x^2}{1-x}}}= \frac{1}{\sqrt{1-x}\sqrt{1-x-4x^2}}$}\\
&\hphantom{\text{\footnotesize$\frac{1}{1-x}\ \frac{1}{\sqrt{1-4\frac{x^2}{1-x}}}$}}= \frac{1}{\sqrt{1 - 2x - 3x^2 + 4x^3}}\\
\shortintertext{and}
&\text{\footnotesize$\frac{2}{(1-x)\left(1+\sqrt{1-4\frac{x^2}{1-x}}\right)}$} = \frac{2}{1-x+\sqrt{1 - 2x - 3x^2 + 4x^3}}\,.
\end{align*}

\end{document}